\newtheorem{theorem}{Theorem}
\newtheorem{definition}{Definition}
\newtheorem{corollary}{Corollary}
\newtheorem{example}{Example}
\DeclareMathOperator*{\esssup}{ess\,sup}
\pgfplotsset{compat=1.15}
\title{Counting Problem for Some Random Conformal Iterated Function Systems}
\author{Hamid Naderiyan\\  University of California - Riverside}
\begin{document}

\begin{abstract}
    This paper studies the counting problem in random dynamical systems. We noticed that the nature of counting in the random setting is completely different than that of the deterministic systems in the sense that non-exponential growth is constructed on a set of full measure. The recurrent behavior of random walks plays a major role in counting in the random setting.
\end{abstract}

\maketitle

\tableofcontents
\begin{large}

\section{Introduction}
The author of this article formerly investigated the counting problem in the context of symbolic spaces for counting finite words subject to a certain length condition, see \cite{HN}. The motivation of the current work is a question posed by M. Urba\'{n}ski for the possibility of finding any asymptotic formula for the growth of an analogous counting function when the deterministic dynamical system is replaced by a random dynamical system, see the equations \ref{eq1} and \ref{random counting} for comparison. In the deterministic setting, this problem has been investigated in various forms. For the references, one can check the introduction of the author's other paper \cite{HN}.

Our main focus in this paper is the context of random conformal iterated function systems (random CIFS). There is no reason one restrict themselves only to this context. There are certain ways that the counting problem in the random setting resembles similar behavior to that of the deterministic setting. Except for one example, we did not mention such random systems in this paper. In fact, the existence of such random systems suggests that there may not be a universal answer to the counting problem in the random setting unless expressed in the language of probability theory.

The study of the counting problem in a random setting faces several challenges. Naturally, one starts with an approach similar to the existing methods for the deterministic setting, which is studying the boundary behavior of the Laplace–Stieltjes transform (Poincar\'{e} series) of the counting function and then applying an appropriate Tauberian theorem for finding a good asymptotic formula for the counting function. However, the understandable behavior of the deterministic Poincar\'{e} series is due to the nice property of the transfer operator having a simple eigenvalue. But for the random transfer operator, this property is not even well-defined in general, since the domain and range of the random transfer operator may not necessarily be identical. Even in the cases where the domain and range are identical, the challenge is the iteration pattern of the random transfer operator. In fact, there is a map that the iteration of the random transfer operator has to follow its pattern. Therefore, other than some special cases the traditional tools in spectral theory and Tauberian theory are not enough for the random setting. The other issue is the very definition of random dynamical systems. For instance, we restricted ourselves only to random CIFS in dimensions one and two, i.e. when the functions are defined over a subdomain of $\mathbb{R}^1$ and $\mathbb{R}^2$, because we faced challenges for higher dimensions, due to the very definition of the random system that made the counting problem not interesting enough. Other than that, there are no limitations concerning the dimension of the system under consideration. Also, we did not consider situations in which the random factor is bigger than one, because it may cause some issues with the very definition of random CGDMS.

The main result of this article is theorem \ref{main theorem 4} in which we show that the counting problem can be very exotic in a random setting. This result partially answers one of Urba\'{n}ski's questions. That the limit infimum and limit supremum can be zero and infinite correspondingly. We obtained a nonzero limit infimum and a finite limit supremum when the counting is subject to certain conditions that resemble periodic behavior within the random dynamics, see theorem \ref{future periodic d-generic}. We also considered a more general case in which one can have non-zero limit infimum and finite limit supremum.

Concerning the structure of the paper, we first mention some preliminary facts and some background results from dynamical system in section \ref{pre}. Then we discuss the random dynamical system setting for which we want to investigate the counting problem in section \ref{sec 3}. In fact, we formulate the counting problem in this section. In section \ref{exp growth} we state our results when the growth of the counting function is exponential. Next, we state our major result in section \ref{non exp growth} concerning non-exponential growth. Finally, we finish with some examples to bring more details on the subject in the last section.

\section{Preliminaries}\label{pre}

\noindent We start by considering $ E$ a countable (finite or infinite) set of symbols (letters). The symbolic space $E^{\mathbb{N}}$ (all called full shift space) is the set of all infinite sequences (one-sided) of the form 
$$e_1e_2e_3...e_n...,$$
where each $e_i \in E$. We usually represent the $n$ consecutive symbols of such a sequence by $\omega$, i.e. 
$$\omega=e_1e_2...e_n,$$
and we call $\omega$ a word. By $|\omega|=n$, we mean the word $\omega$ has length $n$, i.e. it contains $n$ letters. By $E^n$ we consider all the words of length $n$ and $E^*$ represents the set of all words of any length, i.e. $E^*=\cup_{n=1}^{\infty} E^n$. In addition, with an incidence matrix $A: E\times E \rightarrow \{0,1\}$ we can prohibit some sequences. In fact a sequence is permitted if and only if
$$A_{e_1e_2}:=A(e_1,e_2)=1, \; A_{e_2e_3}=A(e_2,e_3)=1, \; ..., \; A_{e_ne_{n+1}}=A(e_n,e_{n+1})=1, \; ... \;.$$
By a subscript notation $A$, we emphasize that only some sequences or words are considered. In fact, $E_A^{\mathbb{N}}$ (also called subshift) denotes the set of all permitted (or admissible) sequences. By $E^n_A$ we represent all the admissible words of length $n$. $E^*_A$ represents all the admissible finite words of any length. Also for a $\rho \in E_A^{\mathbb{N}}$, the set $E^*_{\rho}$ contains all $\omega \in E^*_A$ such that $\omega \rho$ is an admissible sequence. By $E^n_{\rho}$ we mean all $\omega \in E^n_A$ such that $\omega \rho$ is an admissible sequence. By $E^*_{\text{per}}$ we mean all $\omega \in E^*_A$ such that $\omega_n \omega_1$ is admissible. In this case, we say $\omega$ is \textbf{periodic word} and by $\bar{\omega}$ we represent the sequence $\omega \omega\omega ...$. Finally, for each finite word $\omega$ of length $n$ we define the \textbf{cylinder} 
$$[\omega]:=\{\rho \in E_A^{\mathbb{N}} : \; \rho_1...\rho_n=\omega  \}.$$
All the cylinders form a base for a topology on $E_A^{\mathbb{N}}$.
Also, the shift map on $E_A^{\mathbb{N}}$ is defined by 
$$\sigma: E_A^{\mathbb{N}} \to E_A^{\mathbb{N}} \; \; \; \; \; \; \sigma(e_1e_2e_3e_4...)=e_2e_3e_4... \; .$$
We call a subshift \textbf{finitely irreducible} if there exists a finite set $\Omega$ containing words such that for all $e,e' \in E$ there is $\omega \in \Omega$ such that $e\omega e'$ is admissible. Throughout this paper, we restrict ourselves to work with finitely irreducible subshifts.

A complex-valued function $f$ on $E_A^{\mathbb{N}}$ is called \textbf{summable} if 
$$\sum_{e \in E} \exp(\sup_{[e]} \operatorname{Re} (f))<\infty.$$
For a complex-valued H\"{o}lder-type (see \cite{HN} for definition) summable function $f$ we introduce \textbf{transfer} (Ruelle-Perron-Frobenius) operator:
$$\mathcal{L}_{f}:C_b(E_ A^{\mathbb{N}},\mathbb{C}) \rightarrow C_b(E_ A^{\mathbb{N}},\mathbb{C})$$ $$\mathcal{L}_{f} (g)(\rho)=\sum\limits_{\substack{ e \in E_{\rho}^1}}\exp \left(f(e\rho)\right)g(e\rho).$$
The \textbf{topological pressure} of $xf$ (where $x$ is a real number and $f$ is a real-valued function on $E_A^{\mathbb{N}}$) is defined by
$$P(x)=P(xf)=\lim_{n\to \infty}\frac{1}{n}\ln \big( \sum\limits_{\omega \in E_A^n}\exp( x\sup_{[\omega]}S_nf) \big),$$
where $S_nf$ is the \textbf{ergodic sum} of $f$ given by
$$S_nf(\rho)=\sum_{i=0}^{n-1}f(\sigma^i(\rho)).$$
The pressure function is strictly decreasing on its domain:
$$\Gamma=\{ x \in \mathbb{R}: xf \; \; \text{summable} \}.$$
A real-valued function $f: E^{\mathbb{N}}_A \rightarrow \mathbb{R}$ is called \textbf{regular} if $P(x)=0$ for some $x>0$ and is called \textbf{ strongly regular} if it is regular and $0<P(x)<\infty$ for some $x>0$. If the pressure function $P$ has any zero, it will be a unique zero of $P$. We represent this zero by $\delta$.
Note that strong regularity implies
$$\inf \Gamma < \delta.$$
We restrict ourselves to working with strongly regular systems in this paper.
We also want to emphasize some quick tips about the spectral analysis on the transfer operator. For more explanations, we refer to \cite{HN}.
\begin{itemize}
    \item $\mathcal{L}_{s}:=\mathcal{L}_{s f}$ is an operator on $C^{0,\alpha}(E_A^{\mathbb{N}},\mathbb{C})$ for any  $s \in \Gamma ^+=\Gamma \times \mathbb{R} \subseteq \mathbb{C}$.
    \item For every $n\in \mathbb{N}$, the operator-valued function $s \mapsto \mathcal{L}_s ^n$ is holomorphic on $\Gamma ^+$.
    \item The spectral radius of $\mathcal{L}_s$ is at most $e^{P(x)}$ and the essential spectral radius of $\mathcal{L}_{s}$ does not reach $e^{P(x)}$, i.e. $r(\mathcal{L}_s) \leq e^{P(x)}$ and  $r_{\text{ess}}(\mathcal{L}_s)<e^{P(x)}$.
    \item The transfer operator $\mathcal{L}_s$ has at most finitely many eigenvalues of modules $e^{P(x)}$ all of which with multiplicity one.
    \item The spectral representation of $\mathcal{L}_s$ corresponding to the eigenvalues $\xi_1, \xi_2, ...,  \xi_p$ of modulus $e^{P(x)}$ is of the form:
$$ \mathcal{L}_s=\xi_1(s)\mathcal{P}_{1,s}+ \xi_2(s)\mathcal{P}_{2,s} + ...+ \xi_n(s)\mathcal{P}_{n,s}+\mathcal{D}_{s},$$
where each $\mathcal{P}_{i,s}$ is a projection operator.
\item The $m^{th}$ iteration of $\mathcal{L}_s$ is of the form
\begin{equation}\label{spectral decomposition sum}
\mathcal{L}_s^m=\xi_1(s)^m\mathcal{P}_{1,s}+ \xi_2(s)^m\mathcal{P}_{2,s} + ...+ \xi_n(s)^m\mathcal{P}_{n,s}+\mathcal{D}_{s}^m.
\end{equation} 
\end{itemize}
Another important property of the transfer operator to be discussed is the D-generic property. This property prohibits the possibility of admitting specific eigenvalues. We say $f$ is \textbf{D-generic} if $\mathcal{L}_{x+iy}: C^{0,\alpha}(E_A^{\mathbb{N}},\mathbb{C}) \rightarrow C^{0,\alpha}(E_A^{\mathbb{N}},\mathbb{C})$ does not admit $\exp(P(x))$ as eigenvalue when $y\neq 0$. Further, we say $f$ is \textbf{strongly D-generic}, if  $\mathcal{L}_{x+iy}: C^{0,\alpha}(E_A^{\mathbb{N}},\mathbb{C}) \rightarrow C^{0,\alpha}(E_A^{\mathbb{N}},\mathbb{C})$ does not admit any eigenvalue of magnitude $\exp(P(x))$ when $y\neq 0$. 

Given $\rho \in E_A^{\mathbb{N}}$, we define a \textbf{counting functions} by
\begin{equation}\label{eq1}
N_{\rho}(T):=\#\{ \omega \in \cup_{n=1}^{\infty} E_A^n \; : \; \omega\rho \text{ admissible},\;  S_{|\omega|}f(\omega\rho)\geq - T \}, \; \; \; \; \; \; T>0
\end{equation}
This is a function of $T$. We also consider the Laplace–Stieltjes transform of $T \mapsto N_{\rho}(T)$ (also called the \textbf{Poincar\'{e} series}):
$$\eta_{\rho}(s):=\int _0 ^{\infty} \exp (-sT)\text{d}N_{\rho}(T).$$
The functions $\eta_{\rho}(s)$ is holomorphic on $\Gamma^+$. For the proof, see \cite{HN}. Now we are ready to present an asymptotic formula for this counting function. For its proof, see \cite{HN}.
\begin{theorem}\label{border 0 non-degeneric}
If $f: E_A^{\mathbb{N}} \to \mathbb{R}$ is strongly regular H\"{o}lder-type function with $P(\delta f)=0$, for every Borel set $B \subseteq E_A^{\mathbb{N}}$ with boundary of measure $0$ and $\rho \in E_A^{\mathbb{N}}$ we have
$$c_{\delta}\dfrac{h_{\delta}(\rho)}{\chi_{\mu}}m(B)\leq \liminf_{T \rightarrow \infty}\dfrac{N_{\rho}(B,  T)}{\exp(\delta T)}\leq \limsup_{T \rightarrow \infty}\dfrac{N_{\rho}(B, T)}{\exp(\delta T)}\leq c_{\delta} \dfrac{h_{\delta}(\rho)}{\chi_{\mu}}m(B)+y_0^{-1} \dfrac{h_{\delta}(\rho)}{\chi_{\mu}},$$
and 
$$c_{\delta}\dfrac{1}{\chi_{\mu}}\mu(B)\leq \liminf_{T \rightarrow \infty}\dfrac{N_{\text{per}}(B, T)}{\exp(\delta T)}\leq\limsup_{T \rightarrow \infty}\dfrac{N_{\text{per}}(B, T)}{\exp(\delta T)}\leq c_{\delta} \dfrac{1}{\chi_{\mu}}\mu(B)+y_0^{-1} \dfrac{1}{\chi_{\mu}},$$
where 
$$c_{\delta}:=y_0^{-1}\left(\exp(\delta y_0^{-1})-1\right)^{-1}, \; \; \; \; y_0>0. $$
\end{theorem}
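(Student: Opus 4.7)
The plan is to reduce the theorem to the analytic behavior of the Poincaré series $\eta_\rho(B,s)$ refined to the Borel set $B$, and then invoke an effective Tauberian theorem with a free parameter $y_0>0$ that tolerates possible oscillations coming from peripheral spectrum of $\mathcal{L}_s$.

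First, I would rewrite the counting function and its Laplace--Stieltjes transform in spectral terms. By the very definition of $N_\rho(B,T)$,
\[
\eta_\rho(B,s)=\int_0^\infty e^{-sT}\,dN_\rho(B,T)=\sum_{n=1}^{\infty}\sum_{\omega\in E_\rho^n}\mathbbm{1}_B(\omega\rho)\,\exp\!\bigl(sS_nf(\omega\rho)\bigr)=\sum_{n=1}^{\infty}\mathcal{L}_s^n\mathbbm{1}_B(\rho),
\]
so everything is encoded in the iterates of $\mathcal{L}_s$. Using the spectral representation stated in the preliminaries, for $s$ in a complex neighborhood of $\delta$ the dominant part factors through the single simple eigenvalue $\xi_1(s)$ normalized by $\xi_1(\delta)=1$, with eigenprojection $\mathcal{P}_{1,s}\mathbbm{1}_B(\rho)$ and a remainder $\mathcal{D}_s^n$ whose contribution is geometrically smaller. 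Summing the geometric series gives
\[
\eta_\rho(B,s)=\frac{\xi_1(s)\,\mathcal{P}_{1,s}\mathbbm{1}_B(\rho)}{1-\xi_1(s)}+\Phi_\rho(B,s),
\]
with $\Phi_\rho(B,s)$ bounded as $s\to\delta$ from the right half-plane.

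Next I would identify the residue. Expanding $\xi_1(s)=1-\chi_\mu(s-\delta)+O\!\bigl((s-\delta)^2\bigr)$, where $\chi_\mu$ is the Lyapunov exponent (derivative of pressure at $\delta$), and noting that $\mathcal{P}_{1,\delta}\mathbbm{1}_B(\rho)=h_\delta(\rho)\,m(B)$ (since the leading projection is $g\mapsto h_\delta\int g\,dm$ with $m$ the conformal measure and $h_\delta$ the Perron eigenfunction), I obtain a simple pole at $s=\delta$ with residue $\chi_\mu^{-1}h_\delta(\rho)\,m(B)$. The periodic version $N_{\mathrm{per}}(B,T)$ corresponds to replacing $\mathcal{P}_{1,s}\mathbbm{1}_B(\rho)$ by the trace $\mathrm{tr}\!\bigl(\mathcal{P}_{1,\delta}\mathbbm{1}_B\bigr)=\mu(B)$, so the residue is $\chi_\mu^{-1}\mu(B)$.

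Finally I would invoke an effective Tauberian theorem tailored to a function whose Laplace--Stieltjes transform has a known simple pole at $s=\delta$ and is otherwise bounded on the critical line. Because the D-generic property is \emph{not} assumed, $\mathcal{L}_{\delta+iy}$ may still carry eigenvalues on the circle of radius $1$, so $\eta_\rho(B,s)$ could have further poles on $\mathrm{Re}(s)=\delta$ and one cannot expect a genuine limit. Convolving the counting measure against a smooth kernel of effective support $y_0^{-1}$ in $T$ (e.g.\ a Beurling--Selberg majorant/minorant, or equivalently averaging over a window of length $y_0^{-1}$) smears out these oscillations at the cost of an additive error whose sharp constant is exactly the geometric--series factor $c_\delta=y_0^{-1}\bigl(e^{\delta/y_0}-1\bigr)^{-1}$; the discrepancy between the majorant and the target, weighted by $e^{-\delta T}$, contributes the extra $y_0^{-1}h_\delta(\rho)/\chi_\mu$ (resp.\ $y_0^{-1}/\chi_\mu$) term in the upper bound. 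Combining the lower bound from the minorant and the upper bound from the majorant yields the claimed two-sided inequalities.

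The main obstacle is precisely this last step: controlling the residual boundary behavior of $\eta_\rho(B,s)$ on $\mathrm{Re}(s)=\delta$ without assuming D-genericity. All the preceding spectral analysis is standard given the material recalled in the preliminaries, but tuning the smoothing scale $y_0$ so that the Tauberian error matches the explicit constant $c_\delta$ (and checking that the subdominant spectral terms genuinely contribute only to the gap between $\liminf$ and $\limsup$, not to the $\liminf$ side) is what makes the result quantitative.
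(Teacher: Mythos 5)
Your proposal follows essentially the route the paper relies on: the paper itself defers the proof to \cite{HN}, but its preliminaries (the spectral decomposition \ref{spectral decomposition sum}, the remark that $y_0$ is tied to the poles of the Poincar\'{e} series on the critical line, and the explicit invocation of the Graham--Vaaler theorem as ``theorem 4 in \cite{HN}'') make clear that the intended argument is exactly yours --- sum the iterates $\mathcal{L}_s^n\mathbbm{1}_B(\rho)$, isolate the simple pole at $s=\delta$ with residue $h_\delta(\rho)m(B)/\chi_\mu$, and apply the effective Beurling--Selberg/Graham--Vaaler Tauberian theorem with window $y_0$ to absorb the possible peripheral poles in the additive $y_0^{-1}$ term. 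The only point you gloss over is that $\mathbbm{1}_B$ is not H\"older, so one must sandwich it between H\"older functions using the hypothesis that $\partial B$ has measure zero, but this is a standard approximation and does not change the architecture of the proof.
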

\noindent Note that $y_0$ is related to the pole(s) of the Poincar\'{e} series. In a special case when the only pole of this series along its critical line of convergence is $s=\delta$, then $y_0 \to \infty$ and we obtain the following corollary.
\begin{corollary}[Pollicott-Urba\'{n}ski]\label{llll}
Let $\mathcal{S}=\{ \phi_e \}_{e \in E}$ be a strongly regular conformal graph-directed Markov system with D-generic property. Let $\delta$ be the Hausdorff dimension of the limit set of $\mathcal{S}$, then for every Borel set $B \subseteq E_A^{\mathbb{N}}$ with boundary of measure $0$ and $\rho \in E_A^{\mathbb{N}}$ we have
$$\lim_{T \rightarrow \infty}\dfrac{N_{\rho}(B, T)}{\exp( \delta T)}=\dfrac{h_{\delta}(\rho)}{\delta \chi_{\mu_{\delta}}}m_{\delta}(B),$$
and 
$$\lim_{T \rightarrow \infty}\dfrac{N_{\text{per}}(B, T)}{\exp(\delta T)}=\dfrac{1}{\delta \chi_{\mu_{\delta}}}\mu_{\delta}(B).$$
\end{corollary}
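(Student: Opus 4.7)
The plan is to derive this corollary as the limiting case of Theorem \ref{border 0 non-degeneric} in which the parameter $y_0$ is sent to infinity, leveraging the D-generic hypothesis to eliminate every pole of the Poincaré series on the critical line other than $s=\delta$ itself. First I would identify the Hölder potential used in Theorem \ref{border 0 non-degeneric} with the standard geometric potential of the CGDMS $\mathcal{S}=\{\phi_e\}_{e\in E}$, whose pressure has its unique zero at the Hausdorff dimension $\delta$ of the limit set by Bowen's formula; strong regularity of $\mathcal{S}$ gives strong regularity of this potential, so Theorem \ref{border 0 non-degeneric} applies directly and the measures $m$ and $\mu$ appearing there coincide with $m_\delta$ and $\mu_\delta$ respectively.

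Next I would track where the parameter $y_0$ comes from. From the spectral decomposition (\ref{spectral decomposition sum}), the poles of $\eta_\rho(s)$ on the line $\{\operatorname{Re}(s)=\delta\}$ correspond exactly to those imaginary parts $y$ for which $e^{P(\delta)}=1$ is an eigenvalue of $\mathcal{L}_{\delta+iy}$. The D-generic hypothesis precisely rules out any such $y\neq 0$, so $s=\delta$ is the only pole of $\eta_\rho$ on the critical line, and accordingly the $y_0$ appearing in Theorem \ref{border 0 non-degeneric}---which is the distance from $\delta$ to the next pole along that line---may be taken arbitrarily large.

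Passing to the limit $y_0\to\infty$ in the two-sided bounds of Theorem \ref{border 0 non-degeneric} should then close the argument. The additive error $y_0^{-1}h_\delta(\rho)/\chi_\mu$ collapses to $0$, while a first-order Taylor expansion of $\exp(\delta y_0^{-1})-1$ yields
\[
c_\delta=y_0^{-1}\bigl(\exp(\delta y_0^{-1})-1\bigr)^{-1}\longrightarrow \frac{1}{\delta}\qquad (y_0\to\infty),
\]
so $\liminf$ and $\limsup$ coincide and equal $\frac{h_\delta(\rho)}{\delta\,\chi_{\mu_\delta}}m_\delta(B)$ (respectively $\frac{1}{\delta\,\chi_{\mu_\delta}}\mu_\delta(B)$ in the periodic case).

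The main obstacle I anticipate is making this ``$y_0\to\infty$'' step rigorous, since Theorem \ref{border 0 non-degeneric} states its bounds only for each fixed $y_0$ and a naive limit interchange is not automatically legal. The cleanest way around this is not to pass to the limit in the existing bounds but to revisit the underlying Tauberian step: under D-genericity, $\eta_\rho(s)$ extends to a function with a single simple pole at $s=\delta$ on the critical line, so one can apply a classical Ikehara-type Wiener--Ikehara theorem directly to $\eta_\rho$. This bypasses the $y_0$-dependent error term entirely and delivers the exact asymptotic with the claimed leading coefficient in one stroke.
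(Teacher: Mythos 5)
Your proposal is correct and follows essentially the same route as the paper, which obtains the corollary from Theorem \ref{border 0 non-degeneric} precisely by observing that under D-genericity the only pole of $\eta_\rho$ on the critical line is $s=\delta$, so $y_0\to\infty$, $c_\delta\to 1/\delta$, and the error term vanishes. Your worry about the limit interchange is unnecessary: the two-sided bounds hold for every admissible $y_0$ while the $\liminf$ and $\limsup$ are fixed numbers independent of $y_0$, so the squeeze is immediate without revisiting the Tauberian step.
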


\noindent Now we want to define the \textbf{random graph-directed Markov system} (random GDMS).  We adopt this notion from Roy-Urba\'{n}ski, see \cite{RU2}. We start with a directed multi-graph $(V, E, i, t)$, a mapping $A: E\times E \rightarrow \{0,1\}$ and a family of compact Euclidean metric spaces $\{X_v\}_v$. Then we employ an invertible ergodic measure preserving map $T: (\Lambda, B, \nu) \rightarrow (\Lambda, \mathcal{F}, \nu)$ on a complete probability space $(\Lambda, B, \nu)$ and a family of injective contractions $\{\phi_e^{\lambda}: X_{t(e)} \rightarrow X_{i(e)}\}_{e \in E,\lambda \in \Lambda}$ with Lipschitz constants at most $\kappa \in (0,1)$. For each word $\omega \in E_A^*$ and each $\lambda \in \Lambda$ we define 
$$\phi_{\omega}^{\lambda}:=\phi_{\omega_1}^{\lambda} \circ \phi_{\omega_2}^{T(\lambda)} \circ ...\circ \phi_{\omega_n}^{T^{n-1}(\lambda)}.$$
Note that for each $(\omega,\lambda) \in (E_A^*, \Lambda)$ the map $t\mapsto \phi_{\omega}^{\lambda}(t)$ is continuous. The map $\lambda \mapsto \phi_{\omega}^{\lambda}(t)$ is measurable for each $(\omega,t) \in (E_A^*, X_{t(\omega)})$ and $(t,\lambda) \mapsto \phi_{\omega}^{\lambda}(t)$ is jointly measurable for each word $\omega \in E_A^*$. Then for every $\lambda \in \Lambda$, $\rho \in E_A^{\mathbb{N}}$ the collection of sets $\{\phi_{\rho_1...\rho_n}^{\lambda}(X_{t(\rho_n)})\}$ is a decreasing sequence of non-empty compact sets whose diameters are bounded by $\kappa^n$, so $$\cap_{n\geq 1} \phi_{\rho_1\rho_2...\rho_n}^{\lambda}(X_{t(\rho_n)})$$ is a singleton and we denote its only element by $\pi^{\lambda}(\rho)$. Therefore for each $\lambda \in \Lambda$, we define the limit set $J^{\lambda}$ by
$$J^{\lambda}=\pi^{\lambda}(E_A^{\mathbb{N}})=\cup_{\rho \in E_A^{\mathbb{N}}} \pi^{\lambda}(\rho).$$
\begin{definition}\label{random conformal gdms}
We call a system \textbf{random conformal graph directed Markov system} (random CGDMS) if the following conditions are satisfied for some $d \in \mathbb{N}$:
\begin{enumerate}
    \item[a)] For every $v \in V$, $X_v$ is a compact connected subset of $\mathbb{R}^d$ and $X_v=\overline{\text{Int}(X_v)}$.
    \item[b)] (Open Set Condition) For almost every $\lambda$ and all different $e, e' \in E$,
    $$\phi_e^{\lambda}\left(\text{Int}(X_{t(e)})\right)\cap \phi_{e'}^{\lambda}\left(\text{Int}(X_{t(e')})\right)=\emptyset.$$
    \item[c)] (Conformality) For every $v \in V$ there is an open connected set $W_v$ containing $X_v$. Also, for almost every $\lambda$ and each $e \in E$, $\phi_e^{\lambda}$ extends to a $C^1$ conformal diffeomorphism from $W_{t(e)}$ into $W_{i(e)}$ with Lipschitz constant bounded by $\kappa$.
    \item[d)] (Bounded Distortion Property) There are two constants $L\geq 1$ and $\alpha> 0$ such that for every $e \in E$ and every $s, t \in X_{t(e)}$
    $$\left|\frac{|(\phi_e^{\lambda})'(s)|}{|(\phi_e^{\lambda})'(t)|}-1\right| \leq L\|s-t\|^{\alpha},$$
    for almost every $\lambda$, where $|(\phi_e^{\lambda})'(t)|$ denotes the scaling factor of the derivative of $(\phi_e ^{\lambda})'$ at $t$.
\end{enumerate}
\end{definition}
\noindent For a random conformal graph directed Markov system, we define the \textbf{random potential} by
$$f: (E_A^{\mathbb{N}},\Lambda) \rightarrow \mathbb{R}, \; \; \; f(\rho,\lambda):=\log \left|(\phi_{\rho_1}^{\lambda})'\left(\pi^{T(\lambda)}(\sigma(\rho))\right)\right|.$$
The \textbf{essential supremum} of such a real-valued function $f$ on a measure space $(\Lambda,\nu)$ is defined by
$$\esssup_{\lambda} f:=\inf \{r: \; f(\lambda)\leq r \; \text{for } \nu \text{-almost all } \lambda \in \Lambda \}.$$
Such a real-valued function $f$ is called \textbf{summable} if 
$$\sum_{e \in E} \exp \left(\esssup_{\lambda} f(e, \lambda)\right) < \infty. $$
\noindent If we are given such a real-valued function $f$ and a real parameter $x$, then $xf$ would also be a real-valued function. Clearly $xf$ is summable if
$$\sum _{e \in E} \esssup_{\lambda} |(\phi_{e}^{\lambda})'|^x  < \infty.$$
We consider $\Gamma$ the set of all $x$ for which $xf$ is summable, where $f$ is the random potential of the random CGDMS. Also, we set $\Gamma^+=\Gamma \times \mathbb{R}$. These notations are similar to those of the deterministic system but in the rest of the paper, we only work with the random system. So whenever we refer to $\Gamma$ or $\Gamma^+$ we mean the ones just described above for a random system. We introduce a \textbf{random transfer operator} here,

$$\mathcal{L}_{f}^{\lambda}:C_b(E_ A^{\mathbb{N}},\mathbb{C}) \rightarrow C_b(E_ A^{\mathbb{N}},\mathbb{C})$$ $$\mathcal{L}_{f}^{\lambda} (g)(\rho)=\sum\limits_{\substack{ e \in E_{\rho}^1}}\exp \left(f(e\rho, \lambda)\right)g(e\rho).$$

In general, the $n^{th}$ iterate of the random transfer operator is not defined because the domain and range of the operator are not the same. Even though the $n^{th}$ iterate of this operator defined above makes sense, we are only interested in the conventional alternative operator below,

$$(\mathcal{L}_{f}^{\lambda})^n :=\mathcal{L}_{f}^{T^{n-1}(\lambda)} \circ ... \circ \mathcal{L}_{f}^{T^2(\lambda)} \circ \mathcal{L}_{f}^{T(\lambda)} \circ \mathcal{L}_{f}^{\lambda}, \; \; \; n \in \mathbb{N}$$

One can see that for $x\in \Gamma$ and almost every $\lambda \in \Lambda$, there exists a unique bounded measurable function $\lambda \mapsto P^{\lambda}(x)=P^{\lambda}(xf)$, also called the \textbf{random pressure function}, and a unique random probability measure $\{m^{\lambda}_x\}$ such that 
$$(\mathcal{L}_{xf}^{\lambda})^*m^{T(\lambda)}_x=e^{P^{\lambda}(x)}m^{\lambda}_x,$$
for almost every $\lambda \in \Lambda$, see \citep[~ p. 271]{RU2}. That means $P^{\lambda}(x)$ and $m_x$ are uniquely determined by
$$m^{\lambda}_x([e\omega])=e^{-P^{\lambda}(x)}\int_{[\omega]}\left|(\phi_{e}^{\lambda})'\left(\pi^{T(\lambda)}(\tau)\right)\right|^x \text{d}m^{T(\lambda)}_x(\tau),$$
for almost every $\lambda \in \Lambda$. Also, the expected pressure function is defined by 
$$\mathcal{E}P(x):=\int_{\Lambda} P^{\lambda}(x) d\nu.$$
Moreover, we have the following equation from \citep[287]{RU2},
$$\delta_{\Lambda}:=\text{HD}(J^{\lambda})=\inf \{x \; : \; \mathcal{E}P(x)\leq 0 \},$$
for almost every $\lambda \in \Lambda$.

\noindent We would like to mention a few words about the definition of random subshift of finite type. Bogenschutz defines it the way that each fiber is closed in a compact set \citep[p. 420]{Bogenschutz}. Roy-Urba\'{n}ski generalized it to a special case of subshift of finite type with infinite letters \citep[p. 420]{RU2}. Bogenschutz as well uses the infinite letter system $\mathbb{Z_+}$ but he makes the system compact (using one-point compactification $\mathbb{\bar{Z}_+} = \mathbb{Z_+} \cup \{ \infty \}$) so that his bundle random dynamical system theory (reflected in his Ph.d dissertation) applies and this does not include Roy-Urba\'{n}ski random system as special case simply because $\prod_{i = 0}^{\infty} \mathbb{Z_+}$ is not a closed subset of $\prod_{i = 0}^{\infty} \mathbb{\bar{Z}_+}$.

\noindent We also want to mention the \textbf{law of the iterated logarithm} in dimension one. Let $X_1, X_2, X_3, ...$ be a sequence of (real) independent identically distributed random variables whose mean and variance are $\mu$ and $\sigma ^2$ correspondingly. Then 

\begin{equation}\label{lil}
    \liminf_{n \to \infty} \dfrac{X_1+...+X_n -n\mu}{\sqrt{2\sigma ^2 n\log \log n}}=-1, \; \; \; \; \;  \liminf_{n \to \infty} \dfrac{X_1+...+X_n -n\mu}{\sqrt{2\sigma ^2 n\log \log n}}=1, \; \; \; \; \; \text{a.s.}
\end{equation}
For the details see section 4.4 in \cite{revesz}.

\section{Counting in Random Dynamics}\label{sec 3}
\noindent In the previous section, we mentioned two main results for the counting problem in the deterministic dynamical systems, Theorem \ref{border 0 non-degeneric} and corollary \ref{llll}. In this section, we want to formulate the problem in the random dynamical systems. Our focus will be the random conformal iterated function system (CIFS) setting. The notion of random system is what we adopt from Roy and Urba\'{n}ski, see section \ref{pre} or \citep{RU2}. For this purpose, we need to specify a complete probability space with an invertible ergodic measure preserving transformation.

\noindent We consider a countable (finite or infinite) set of complex numbers $z$ within the unit disk that are bounded away from $0$:
$$\mathcal{Z} \subset \{z \in \mathbb{C} : 0<\epsilon <|z|\leq 1 \},$$
and we set:
$$\Lambda:=\mathcal{Z}^{\mathbb{Z}},$$
which is the set of all two-sided sequences with letters in $\mathcal{Z}$. We represent an element of $\Lambda$ by $\lambda$ and of course $\lambda_i$ is $i^{th}$ coordinate of $\lambda$, where $i \in \mathbb{Z}$. The readers will notice later that all results in this paper only depend on the future part of each $\lambda \in \Lambda$, i.e. only on $\lambda_0 \lambda_1 \lambda_2 ...$. But we still require a two-sided sequence $\lambda$ to meet the definition of the random system, see \citep[264]{RU2}. For the $\sigma-$Algebra $\mathcal{B}$ of measurable sets we just consider the Borel sets, for the ergodic invertible measure $\nu$ we consider a Bernoulli probability measure, and for $T$ we consider the shift map on $\Lambda$. Therefore $(\Lambda, \mathcal{B},\nu,T)$ is just two-sided full shift space with an ergodic measure. To summarize, we have:
\begin{itemize}
    \item $\Lambda=\{\lambda= ...\lambda_{-(i-1)}...\lambda_{-2}\lambda_{-1}\lambda_0\lambda_1\lambda_2...\lambda_{i-1}... \; : \; \lambda_i \in \mathcal{Z}, \; i \in \mathbb{Z} \}$
    \item $[\lambda_{i_1}=z_1, \lambda_{i_2}=z_2, ..., \lambda_{i_k}=z_k]=\{\lambda \in \Lambda \; : \; \lambda_{i_1}=z_1, \lambda_{i_2}=z_2, ..., \; \lambda_{i_k}=z_k \}$
    \item $\sum_{z \in \mathcal{Z}}\nu([\lambda_0=z])=1$
    \item $T(...\lambda_{-(i-1)}...\lambda_{-2}\lambda_{-1}\boldsymbol{\lambda_0}\lambda_1\lambda_2...\lambda_{i-1}...)=...\lambda_{-(i-2)}...\lambda_{-1}\lambda_{0}\boldsymbol{\lambda_1}\lambda_2\lambda_3...\lambda_{i}...$
    \item $\nu(T^{-1}(B))=\nu(B), \; \; \; B\in \mathcal{B}.$
\end{itemize}
With this measurable system $(\Lambda, \mathcal{B},\nu,T)$, we can introduce a random system for any deterministic CIFS in dimension one or two. This process can be defined in different ways. We explain an important one, which will be our main focus in this section. Assume $\{ \phi_e \}_{e \in E}$ is a CIFS, i.e. a countable (finite or infinite) family of conformal contractions defined on a compact Euclidean space $X$, then we define
$$\phi_e^{\lambda}:=\lambda_0\phi_e.$$
Note that we have to be careful with our definition so that we make sure it satisfies definition \ref{random conformal gdms}. For instance, we need to know that $\lambda_0$, chosen non-real, only makes sense when $\phi_e$ is a complex-valued function, or this $\lambda_0$ should be appropriate enough that the image of $\phi_e^{\lambda}$ is still in $X$. As well note that this random CIFS is summable exactly for those $x$ that the deterministic system is summable. Therefore with no change, we use the same notation $\Gamma$ for all these $x$ and $\Gamma^{+}$ for the right half-plane. Next, we want to find the random pressure and the random transfer operator associated with this random CIFS. Referring to below the definition \ref{random conformal gdms}, we can obtain our random potential function
$$f(\rho,\lambda)=\log \left|(\phi_{\rho_1}^{\lambda})'\left(\pi^{T(\lambda)}(\sigma\rho)\right)\right|=\log\left|(\phi_{\rho_1})'\left(\pi(\sigma\rho)\right)\lambda_0\right|=\log\left|(\phi_{\rho_1})'\left(\pi(\sigma\rho)\right)\right|+\log|\lambda_0|,$$
where $f(\rho)=\log\left|(\phi_{\rho_1})'\left(\pi(\sigma\rho)\right)\right|$ is just the potential obtained from the deterministic system $\{ \phi_e \}_{e \in E}$, see below the definition 15 in  \cite{HN}. Therefore, for the \textbf{random ergodic sum} we get 
$$S_nf(\rho,\lambda)=f(\rho, \lambda)+f(\sigma \rho, T\lambda)+...+f(\sigma^{n-1}\rho, T^{n-1}\lambda)$$
$$=f(\rho)+\log|\lambda_0|+f(\sigma \rho)+\log|\lambda_1|+f(\sigma^2\rho)+\log|\lambda_2|+...+f(\sigma^{n-1}\rho)+\log|\lambda_{n-1}|$$
\begin{equation}\label{random ergodic sum}
=S_nf(\rho)+\log|\lambda_0\lambda_1...\lambda_{n-1}|.
\end{equation}
We can also calculate the random pressure function:
$$P^{\lambda}(x)=P(x)+x\log |\lambda_0|,$$
for almost all $\lambda$. Therefore the expected pressure will be
\begin{equation}\label{eqr 8} 
\mathcal{E} P(x)=\int_{\lambda}P^{\lambda}(x)\text{d}\nu=P(x)+\left(\sum_{z \in \mathcal{Z}}\nu([\lambda_0=z])\log |z|\right)x
\end{equation}
Note that if there is at least one $z \in \mathcal{Z}$ with $|z|<1$, then the parenthesis above would be a negative value, otherwise, it vanishes. Therefore,
$$\delta_{\Lambda}\leq \delta.$$
\noindent When we say a random CIFS constructed above has D-generic property, we mean the deterministic potential function has the D-generic property. Similarly, for strongly D-generic property.
Furthermore, the random transfer operator is given by
$$\mathcal{L}_x^{\lambda}(g)(\rho):=\mathcal{L}_{xf}^{\lambda}(g)(\rho)=\sum\limits_{e \in E_{\rho}^1}\exp \left( xf(e\rho, \lambda) \right)g(e\rho),$$
for a bounded continuous function $g$ and $x \in \Gamma$. This yields
$$(\mathcal{L}_x^{\lambda})^n(\mathbbm{1})(\rho)=\sum\limits_{\omega \in E_{\rho}^n}\exp (xS_nf(\omega\rho, \lambda))=\sum\limits_{\omega \in E_{\rho}^n}\exp (xS_nf(\omega\rho))|\lambda_0\lambda_1...\lambda_{n-1}|^x$$
$$=|\lambda_0\lambda_1...\lambda_{n-1}|^x\mathcal{L}_x^n(\mathbbm{1})(\rho),$$
where $\mathbbm{1}$ is just the characteristic function of $E_A^{\mathbb{N}}$.
It is important to note that the above expression is not the $n^{th}$ iteration of the operator $\mathcal{L}_x^{\lambda}$, simply because of the random variable $\lambda$ that is involved. Now we are ready to define an appropriate \textbf{counting function} for a fixed $\rho \in E_A^{\mathbb{N}}$ and a fixed $\lambda \in \Lambda$:
\begin{equation}\label{random counting}
   N_{\rho}^{\lambda}(T):= \#\{ \omega \in E_{\rho}^* : S_{|\omega|}f(\omega \rho,\lambda)\geq -T \}, \; \; \; \; \; \; T>0
\end{equation}
It is encouraged to compare this with \ref{eq1}. The Laplace–Stieltjes transform of this counting function (also called the \textbf{random Poincar\'{e} series}) is:
\begin{equation}\label{eta}
\eta_{\rho}^{\lambda}(s):=\int_0 ^{\infty} \exp(-sT)\text{d}N_{\rho}^{\lambda}(T).
\end{equation}
Alternatively, we can find another expression of $\eta_{\rho}^{\lambda}$ in terms of the random transfer operator. In fact,
$$\eta_{\rho}^{\lambda}(s)=\sum _{n=1} ^{\infty} \exp(-sT_i)\left(N_{\rho}^{\lambda}(T_i)-N_{\rho}^{\lambda}(T_{i-1})\right),$$
where $T_1 < T_2 < T_3 <...$ is the increasing sequence of discontinuities of $N_{\rho}^{\lambda}(T)$. The random Poincar\'{e} series can also be expressed by 
$$\eta_{\rho}^{\lambda}(s)=\sum_{n=1} ^{\infty} \sum\limits_{\omega \in E_{\rho}^n}\exp(sS_nf(\omega \rho,\lambda))$$
$$\hspace{.6cm} =\sum_{n=1} ^{\infty} \sum\limits_{\omega \in E_{\rho}^n}\exp(sS_nf(\omega \rho,\lambda))=\sum_{n=1} ^{\infty} (\mathcal{L}_s^{\lambda}) ^n (\mathbb{1})(\rho)=\sum_{n=1} ^{\infty}|\lambda_0\lambda_1...\lambda_{n-1}|^s\mathcal{L}_s^n(\mathbbm{1})(\rho).$$ 
Let $\delta^{\lambda}$ represent the \textbf{critical line of convergence} for this series.

\noindent This assumption enables us to obtain another expression of this random Poincar\'{e} series. In fact, by the spectral decomposition from \ref{spectral decomposition sum}, we can write
$$\eta_{\rho}^{\lambda}(s)=\sum_{k=1} ^{\infty}|\lambda_0\lambda_1...\lambda_{k-1}|^s\mathcal{L}_s^k(\mathbbm{1})(\rho)$$
$$=\sum _{k=1} ^{\infty}|\lambda_0\lambda_1...\lambda_{k-1}|^s \left(\xi_1^k(s) \mathcal{P}_{1,s}(\mathbbm{1})+ \xi_2^k(s)\mathcal{P}_{2,s}(\mathbbm{1}) + ...+ \xi_n^k(s)P_{n,s}(\mathbbm{1})+\mathcal{D}_s^k(\mathbbm{1})\right)$$
\begin{equation}\label{random spectral decom}
=\left(\sum _{k=1} ^{\infty}|\lambda_0\lambda_1...\lambda_{k-1}|^s \xi_1^k(s)\right) \mathcal{P}_{1,s}(\mathbbm{1})+...+\left(\sum _{k=1} ^{\infty}|\lambda_0\lambda_1...\lambda_{k-1}|^s \xi_n^k(s)\right) \mathcal{P}_{n,s}(\mathbbm{1})
\end{equation}
$$+\sum _{k=1} ^{\infty}|\lambda_0\lambda_1...\lambda_{k-1}|^s \mathcal{D}_s^k(\mathbbm{1}).$$
It is important to note that the analogue of proposition 13 in \cite{HN} is not easy to construct for random systems as the transfer operator under consideration here is a random operator and the conventional notion of eigenvalue does not exist here for us in this section. Therefore to find an asymptotic formula for the counting function \ref{random counting}, we have to apply Ikehara-Wiener theorem or Garaham-Vaaler theorem directly, see theorems 3,4 in \cite{HN}. This requires understanding the behavior of each of parenthesis in \ref{random spectral decom} above, along $x=\delta^{\lambda}$.

\section{Exponential Growth}\label{exp growth}

\noindent We remind that we are only working with finitely irreducible strongly regular systems in this paper.
\begin{definition}\label{future priodic}
We say $\lambda \in \Lambda$ is \textbf{future periodic}, if there exists $k \in \mathbb{N}$ such that $ \sigma^k(\lambda_0\lambda_1...)=\lambda_0\lambda_1...,$
and we say it is \textbf{ eventually future periodic}, if there exist $k \in \mathbb{N}$ and an integer $m \geq 0$ such that $ \sigma^k(\lambda_m\lambda_{m+1}...)=\lambda_m\lambda_{m+1}... $ .
\end{definition}
\begin{theorem}\label{future periodic d-generic}
Given a random CIFS constructed in section \ref{sec 3} with D-generic property and each eventually future periodic $\lambda$. There exist $C,D>0$, such that 
$$C\leq \liminf_{T \rightarrow \infty}\dfrac{N_{\rho}^{\lambda}(T)}{e^{\delta^{\lambda}T}} \leq \limsup_{T \rightarrow \infty}\dfrac{N_{\rho}^{\lambda}(T)}{e^{\delta^{\lambda}T}}\leq D.$$
\end{theorem}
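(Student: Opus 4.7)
The plan is to apply a Tauberian theorem to the random Poincaré series $\eta_{\rho}^{\lambda}(s)$, exploiting the periodic structure of $\lambda$ to produce a meromorphic extension of $\eta_{\rho}^{\lambda}$ past its critical line $\mathrm{Re}(s)=\delta^{\lambda}$. The fact that we only obtain $\liminf$ and $\limsup$ pinched between positive constants, rather than an exact asymptotic, reflects the possible presence of additional simple poles on the critical line beyond $s=\delta^{\lambda}$, in analogy with the non-degenerate part of Theorem \ref{border 0 non-degeneric}; D-genericity will be used precisely to keep these additional poles isolated and simple.

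First I would reduce to the purely future periodic case: an initial transient of length $m$ contributes only a finite (hence entire) sum plus a factor $|\lambda_0\cdots\lambda_{m-1}|^s$ times a tail having the same analytic structure as the periodic case, so asymptotics are unaffected. Assume then $\lambda$ has future period $k$ and set $L:=|\lambda_0\lambda_1\cdots\lambda_{k-1}|$. Writing $n=qk+r$ with $0\leq r<k$, periodicity gives $|\lambda_0\cdots\lambda_{n-1}|=L^q\cdot|\lambda_0\cdots\lambda_{r-1}|$. Inserting the spectral decomposition (\ref{spectral decomposition sum}) into
$$\eta_{\rho}^{\lambda}(s)=\sum_{n\geq 1}|\lambda_0\cdots\lambda_{n-1}|^s\,\mathcal{L}_s^n(\mathbbm{1})(\rho)$$
and summing the resulting geometric series in $q$ yields
$$\eta_{\rho}^{\lambda}(s)=\sum_{i}\frac{A_i(s,\rho)}{1-L^s\,\xi_i(s)^k}+H(s),$$
where each $A_i(s,\rho)$ is holomorphic in $s$ and involves $\mathcal{P}_{i,s}(\mathbbm{1})(\rho)$, and $H(s)$ is the contribution from $\mathcal{D}_s^n$. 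Because $r_{\mathrm{ess}}(\mathcal{L}_s)<e^{P(\mathrm{Re}\,s)}$ supplies a spectral gap, a root-test shows that $H$ is holomorphic in a strict right half-plane past $\mathrm{Re}\,s=\delta^{\lambda}$.

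For the leading branch $\xi_1(s)$, the equation $L^s\,\xi_1(s)^k=1$ has $s=\delta^{\lambda}$ as its unique real solution, producing a simple pole with positive residue proportional to $h_{\delta}(\rho)$ (one checks $\log L+k\,P'(\delta^{\lambda})<0$ via strong regularity). Further poles on the critical line $s=\delta^{\lambda}+iy$ can only arise where some branch $\xi_i$ attains the maximal modulus $e^{P(\delta^{\lambda})}$ with the correct rotational phase; D-genericity rules out $\xi_1(\delta^{\lambda}+iy)=e^{P(\delta^{\lambda})}$ for $y\neq 0$, and combined with quasi-compactness of $\mathcal{L}_s$, the remaining candidate solutions are isolated zeros of a holomorphic function, with only finitely many in any bounded vertical strip, each simple. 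Feeding this meromorphic structure into the Graham-Vaaler Tauberian theorem (Theorem 4 of \cite{HN}) yields
$$C\leq\liminf_{T\to\infty}\frac{N_{\rho}^{\lambda}(T)}{e^{\delta^{\lambda}T}}\leq\limsup_{T\to\infty}\frac{N_{\rho}^{\lambda}(T)}{e^{\delta^{\lambda}T}}\leq D,$$
with $C>0$ coming from the strictly positive residue at $s=\delta^{\lambda}$ and $D<\infty$ because only finitely many additional simple poles contribute.

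The main obstacle is supplying the analytic input the Tauberian theorem requires: a uniform polynomial bound on $\eta_{\rho}^{\lambda}(s)$ along vertical lines slightly to the right of $\mathrm{Re}\,s=\delta^{\lambda}$ away from the poles, together with the verification that the critical-line pole set does not accumulate at infinity. Both follow from quasi-compactness of $\mathcal{L}_s$ and standard estimates on the analytic continuation of $\xi_1$ off the real axis, which can be imported essentially verbatim from the deterministic analysis of \cite{HN}. The randomness enters only as the explicit weight $L^s$ in the geometric sum, which is harmless once $\lambda$ is future periodic.
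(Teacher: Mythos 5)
Your proposal follows essentially the same route as the paper's proof: insert the spectral decomposition \eqref{spectral decomposition sum} into the random Poincar\'{e} series, sum the geometric series over periods to produce denominators of the form $1-|\lambda_m\cdots\lambda_{m+k-1}|^s\xi_i(s)^k$, use D-genericity to rule out the degenerate case $\xi_i(s)^k\equiv|\lambda_m\cdots\lambda_{m+k-1}|^{-s}$ so that the value $1$ is omitted on a punctured vertical segment about $\delta^{\lambda}$, and then apply the Graham--Vaaler Tauberian theorem. The paper's proof is somewhat terser (it does not spell out the positivity of the residue or the polynomial growth bounds you flag, treating them as imported from \cite{HN}), but the decomposition, the role of D-genericity, and the Tauberian input are identical.
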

\begin{proof}
We let 
$$\lambda=...\lambda_m\lambda_{m+1}\lambda_{m+2}...\lambda_{m+k-1}\lambda_m\lambda_{m+1}\lambda_{m+2}...\lambda_{m+k-1}...,$$
$$a_i(s):=|\lambda_m...\lambda_{m+k-1}|^s\xi_i(s)^{k},$$
$$c_i(s):=|\lambda_0...\lambda_{m-1}|^s\left(|\lambda_m|^s\xi_i(s)+|\lambda_m\lambda_{m+1}|^s\xi_i(s)^2+...+|\lambda_m...\lambda_{m+k-1}|^s\xi_i(s)^k\right).$$
Then there exists a holomorphic function $b_i(s)$ such that the $i^{th}$ parenthesis in \ref{random spectral decom} above, can be written as:
$$b_i(s)+a_i(s)c_i(s)+a_i(s)^2c_i(s)+a_i(s)^3c_i(s)+...=b_i(s)+c_i(s)\dfrac{a_i(s)}{1-a_i(s)}.$$
Now since $a_i(s)$ is holomorphic function, there should be $y_i>0$ such that $a_i(s)$ omits $1$ on $\{s=\delta^{\lambda}+iy: -y_i<y<y_i, \;  y\neq 0\}$, unless $\xi_i(s)=(\sqrt[k]{|\lambda_m...\lambda_{m+k-1}|})^{-s}$, which implies that $i=1$. But in this case $\xi_1(s)$ meets $1$ infinitely often which cannot happen as we assumed D-generic property. This means there is $y_0>0$ such that for each $i$, the function $a_i$ omits $1$ on $\{s=\delta^{\lambda}+iy: -y_0<y<y_0, \; y\neq 0\}$. Therefore the $i^{th}$ parenthesis has a continuous extension at least on this segment and so Graham-Vaaler theorem (see theorem 4 in \cite{HN}) is applicable. This finishes the proof.
\end{proof}
Note that the constants $C,D$ depend on $\lambda$. In fact, one can construct a sequence of eventually future periodic $\lambda$ for which $C \to 0$ and $D \to \infty$. Also, it is easy to see that $\delta^{\lambda}$ is the unique root of 
$$P(x)+\frac{x}{k}\log |\lambda_m...\lambda_{m+k-1}|=0.$$
Additionally, it is clear that for many future periodic $\lambda$, we have $\delta_{\lambda}\neq \delta_{\Lambda}$. Therefore $$N_{\rho}^{\lambda}(T) \not\sim \exp(\delta_{\Lambda} T), \; \; \; \text{as } \; \; T \to \infty.$$
If further the potential of the deterministic system $\mathcal{S}=\{ \phi_{e} \}_{e \in E}$ is assumed to be strongly D-generic, we can improve the above theorem.
\begin{corollary}\label{future periodic strongly d-generic}
Given a random CIFS constructed in section \ref{sec 3} with strongly D-generic property and each eventually future periodic $\lambda$. There is a constant $C>0$ such that 
$$ \lim_{T \rightarrow \infty}\dfrac{N_{\rho}^{\lambda}(T)}{e^{\delta^{\lambda}T}}=C.$$
\end{corollary}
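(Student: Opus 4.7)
The plan is to refine the argument of Theorem \ref{future periodic d-generic} by strengthening ``no pole on the critical line except at $s=\delta^\lambda$'' to ``simple pole at $s=\delta^\lambda$ and continuous extension to the rest of the critical line'', so that a Wiener--Ikehara Tauberian theorem (Theorem~3 of \cite{HN}) applies and yields an honest limit. I inherit the notation from the proof of Theorem~\ref{future periodic d-generic}: after writing $\lambda$ as having pre-period $m$ and period $k$, each $i$-th parenthesis in \eqref{random spectral decom} is
$$b_i(s) + c_i(s)\,\frac{a_i(s)}{1 - a_i(s)},\qquad a_i(s)=|\lambda_m\ldots\lambda_{m+k-1}|^s\xi_i(s)^k,$$
with $b_i,c_i$ holomorphic in a neighborhood of $\Re s=\delta^\lambda$, and the tail $\sum_k|\lambda_0\ldots\lambda_{k-1}|^s\mathcal{D}_s^k(\mathbbm 1)$ converges geometrically (since $r(\mathcal{D}_s)<e^{P(\Re s)}$) and extends holomorphically across the critical line.

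The first step is the crucial use of the stronger hypothesis. Strong D-genericity says no eigenvalue of $\mathcal{L}_{x+iy}$ has modulus $e^{P(x)}$ for $y\neq 0$; hence at $s=\delta^\lambda+iy$ with $y\neq 0$,
$$|a_i(s)|=|\lambda_m\ldots\lambda_{m+k-1}|^{\delta^\lambda}\,|\xi_i(s)|^k<|\lambda_m\ldots\lambda_{m+k-1}|^{\delta^\lambda}e^{kP(\delta^\lambda)}=1,$$
where the last equality uses that $\delta^\lambda$ solves $P(x)+\tfrac{x}{k}\log|\lambda_m\ldots\lambda_{m+k-1}|=0$. So $1-a_i(s)$ does not vanish anywhere on the critical line except possibly at $y=0$. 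At $y=0$, by Perron--Frobenius only the leading branch satisfies $\xi_1(\delta^\lambda)=e^{P(\delta^\lambda)}$, so $a_1(\delta^\lambda)=1$ while $|a_i(\delta^\lambda)|<1$ for $i\geq 2$. Therefore, on $\Re s=\delta^\lambda$, the only singularity of $\eta_\rho^\lambda(s)$ is contributed by the $i=1$ summand at $s=\delta^\lambda$.

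Next I verify the pole is simple and compute its residue. Along the real axis $\xi_1(s)=e^{P(s)}$, so
$$a_1'(\delta^\lambda)=\bigl(\log|\lambda_m\ldots\lambda_{m+k-1}|+kP'(\delta^\lambda)\bigr)=k\cdot\tfrac{d}{dx}\mathcal{E}P(x)\big|_{x=\delta^\lambda}\neq 0,$$
because the expected pressure is strictly decreasing (strong regularity). Hence near $s=\delta^\lambda$,
$$c_1(s)\frac{a_1(s)}{1-a_1(s)}=\frac{-c_1(\delta^\lambda)}{a_1'(\delta^\lambda)(s-\delta^\lambda)}+\text{(holomorphic)},$$
and, multiplying by the holomorphic factor $\mathcal{P}_{1,s}(\mathbbm 1)(\rho)$, we obtain a simple pole at $s=\delta^\lambda$ with residue
$$C:=\frac{-c_1(\delta^\lambda)\,\mathcal{P}_{1,\delta^\lambda}(\mathbbm 1)(\rho)}{a_1'(\delta^\lambda)}.$$
All other pieces of \eqref{random spectral decom} extend continuously to $\Re s=\delta^\lambda$ by the previous paragraph.

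Finally, I apply the Wiener--Ikehara theorem to $\eta_\rho^\lambda(s)-C/(s-\delta^\lambda)$, which extends continuously to the closed half-plane $\Re s\geq\delta^\lambda$, to conclude $N_\rho^\lambda(T)/e^{\delta^\lambda T}\to C$. The main obstacle I anticipate is the large-$|y|$ behavior along the critical line: Wiener--Ikehara demands more than pointwise continuity, and one needs a uniform (in $y$) decay/boundedness of the extended function as $|y|\to\infty$. This is handled by standard operator-theoretic estimates, exploiting that $r(\mathcal{L}_{\delta^\lambda+iy})$ is bounded away from $e^{P(\delta^\lambda)}$ on each compact set in $y\neq 0$ and that $\mathcal{L}_{\delta^\lambda+iy}$ is norm-continuous in $y$, combined with the geometric decay of the remainder term, but this is the only place where serious care beyond the D-generic proof is required.
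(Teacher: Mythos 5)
Your proposal is correct and follows essentially the same route as the paper: the paper's own (two-line) proof simply observes that under strong D-genericity $a_i(s)$ can meet $1$ only when $i=1$ and $s$ is real, so the decomposition from the proof of Theorem \ref{future periodic d-generic} extends continuously to the whole critical line except the real point and the Ikehara--Wiener theorem applies; you make the same observation and additionally supply the (omitted but correct) verification that the singularity at $s=\delta^{\lambda}$ is a simple pole, together with its residue. The only slip is cosmetic: $\log|\lambda_m\cdots\lambda_{m+k-1}|+kP'(\delta^{\lambda})$ is $k$ times the derivative of $x\mapsto P(x)+\tfrac{x}{k}\log|\lambda_m\cdots\lambda_{m+k-1}|$ rather than of the expected pressure $\mathcal{E}P$, though its nonvanishing holds for the reason you give.
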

\begin{proof}
Along the proof of the previous theorem, it is enough to notice that $a_i(s)$ can meet $1$ only if $i=1$ and $s$ is real, which means all along the critical line of convergence (except at the real point) we have a continuous extension and therefore Ikehara-Wiener theorem (see theorem 3 in \cite{HN})  is applicable in this case. 
\end{proof}
\noindent Below we discuss some notions from the theory of random walk to prove some results for our counting problem. For simplicity, we restrict ourselves to the case when $\mathcal{Z}$ is finite. With some special care, one can generalize to infinite $\mathcal{Z}$. For each positive integer $n$, each $z \in \mathcal{Z}$ and every $\lambda \in \Lambda$ we let $s_{n,z}(\lambda)$ denote the number of times that $z$ appears in $\lambda_0...\lambda_{n-1}$. It is obvious that for each $\lambda \in \Lambda$ and $n \in \mathbb{N}$ we have
$$\sum_{z \in \mathcal{Z}}s_{n,z}(\lambda)=n.$$
Assume $\#\mathcal{Z}=a$ and $\nu([\lambda_0=z_i])=p_i$, then we can assign a random walk of the following form
$$s_n=(s_{n,z_2}, s_{n,z_3}, ..., s_{n,z_{a}}),$$
which is a $(a-1)$-dimensional random walk, because $s_{n,z_1}$-coordinate is determined by $s_{n,z_1}=n-\sum_{i=2}^a s_{n,z_i}$. We know the normalized random walk 
$$s_n-n(p_2, p_2, ..., p_a)$$
will be recurrent almost surely when $a\leq 3$ by a theorem of Pólya in probability theory, see \citep[193]{revesz}.
\begin{definition}\label{bounded boundary}
We say $\lambda \in \Lambda$ is of \textbf{bounded fluctuation} if there exists a vector $l=(l_2, l_3, ..., l_a)$ such that the assigned walk $s_n-n(l_2, l_2, ..., l_a)$ is bounded forever. Alternatively, $\lambda \in \Lambda$ is of bounded fluctuation if there are numbers $p,q \in \mathbb{R}$ and $l_{2}, ..., l_a \in \mathbb{R}^{\geq 0}$ such that for all for all $2 \leq i \leq a$ and all $n \in \mathbb{N}$: 
$$n l_i +p\leq s_{n,z_i}(\lambda) \leq n l_i +q.$$
\end{definition}
Note that in the above definition, it does not make sense to allow $l_i$ be negative. Also, if the number of times that the letter $z_i \in \mathcal{Z}$ appears in $\lambda_0\lambda_1\lambda_2...$ is finite then $l_{i}=0$. Also $l_i$'s are subject to $\sum_{i=1}^n l_i=1$ because we said $\sum_z s_{n,z}=n$. 
\noindent To prove the following theorem we set
$$c:=\sum_{i=1}^a l_i\log|z_i|, \; \; \; \;  d:=\sum_{i=1}^a \log|z_i|.$$
We remind that $f$ is the potential function for the deterministic system $\mathcal{S}=\{ \phi_{e} \}_{e \in E}$.
\begin{theorem}\label{main theorem 5}
Given a random CIFS constructed in section \ref{sec 3}. We assume $g:=f+c$ inherits strong regularity from $f$ (the potential function of the deterministic system) and $\#\mathcal{Z}$ is finite. For any $\lambda$ of bounded fluctuation, there exist constants $C,D>0$ such that 
$$C\leq \liminf_{T \rightarrow \infty}\dfrac{N_{\rho}^{\lambda}(T)}{e^{\delta^{\lambda}T}} \leq \limsup_{T \rightarrow \infty}\dfrac{N_{\rho}^{\lambda}(T)}{e^{\delta^{\lambda}T}}\leq D.$$
\end{theorem}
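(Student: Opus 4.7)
The plan is to reduce the random counting problem to a deterministic one with a perturbed potential, using the fact that under bounded fluctuation the random ergodic sum differs from a certain deterministic ergodic sum by only a uniformly bounded error. Since a bounded shift in the threshold $T$ corresponds to multiplicative constants in front of $e^{\delta^{\lambda} T}$, no Tauberian analysis of $\eta_{\rho}^{\lambda}$ is needed; the result is inherited directly from Theorem \ref{border 0 non-degeneric} applied to the potential $g$.

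First I would exploit the definition of bounded fluctuation to write
\[
\log|\lambda_0\lambda_1\cdots\lambda_{n-1}| \;=\; \sum_{i=1}^{a} s_{n,z_i}(\lambda)\log|z_i| \;=\; nc + R_n(\lambda),
\]
with $R_n(\lambda):=\sum_{i=1}^{a}(s_{n,z_i}(\lambda)-nl_i)\log|z_i|$. The bounded fluctuation condition controls $|s_{n,z_i}-nl_i|$ uniformly in $n$ for $2\leq i\leq a$; combining this with the identity $s_{n,z_1}=n-\sum_{i\geq 2}s_{n,z_i}$ and $l_1=1-\sum_{i\geq 2}l_i$ extends the control to $i=1$. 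Because $\#\mathcal{Z}=a<\infty$, we obtain a single constant $M=M(\lambda)>0$ with $|R_n(\lambda)|\leq M$ for all $n$.

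Next, using the formula \ref{random ergodic sum} for the random ergodic sum and the identity $S_n g(\omega\rho)=S_n f(\omega\rho)+nc$ coming from $g=f+c$, I rewrite
\[
S_{|\omega|}f(\omega\rho,\lambda) \;=\; S_{|\omega|}f(\omega\rho) + \log|\lambda_0\cdots\lambda_{|\omega|-1}| \;=\; S_{|\omega|}g(\omega\rho) + R_{|\omega|}(\lambda).
\]
Since $|R_{|\omega|}(\lambda)|\leq M$, the counting function satisfies the sandwich
\[
N_{\rho}^{g}(T-M) \;\leq\; N_{\rho}^{\lambda}(T) \;\leq\; N_{\rho}^{g}(T+M),
\]
where $N_{\rho}^{g}$ is the deterministic counting function \ref{eq1} built from $g$. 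By hypothesis $g$ is strongly regular, and it is Hölder-type since it differs from $f$ by the constant $c$, so Theorem \ref{border 0 non-degeneric} applies and yields constants $0<C_0\leq D_0<\infty$ with
\[
C_0 \;\leq\; \liminf_{T\to\infty}\frac{N_{\rho}^{g}(T)}{e^{\delta_g T}} \;\leq\; \limsup_{T\to\infty}\frac{N_{\rho}^{g}(T)}{e^{\delta_g T}} \;\leq\; D_0,
\]
where $\delta_g$ is the unique zero of $s\mapsto P(sg)=P(sf)+sc$. Taking $C:=e^{-\delta_g M}C_0$ and $D:=e^{\delta_g M}D_0$ in the sandwich gives the claimed inequalities, provided we identify $\delta^{\lambda}=\delta_g$. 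This identification is immediate from the representation
\[
\eta_{\rho}^{\lambda}(s) \;=\; \sum_{n=1}^{\infty} e^{s R_n(\lambda)} \sum_{\omega\in E_{\rho}^{n}} e^{s S_n g(\omega\rho)},
\]
which, since $e^{sR_n(\lambda)}$ stays in a compact positive interval for each real $s$, shares its abscissa of convergence with the deterministic Poincaré series associated to $g$.

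The main obstacle is conceptual rather than technical: it is recognizing that bounded fluctuation is exactly the right condition for the random correction $\log|\lambda_0\cdots\lambda_{n-1}|$ to be a linear drift $nc$ plus a uniformly bounded remainder, so that the random system looks, up to an $O(1)$ error in the threshold, like a deterministic strongly regular Hölder-type system with potential $g$. The only subtle point is the extension of the fluctuation bound from $i\geq 2$ to $i=1$, which follows from the conservation law $\sum_i s_{n,z_i}(\lambda)=n$; everything else is elementary manipulation of the Definition \ref{bounded boundary} and a direct appeal to Theorem \ref{border 0 non-degeneric}.
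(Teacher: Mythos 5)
Your proposal is correct and follows essentially the same route as the paper: the paper likewise uses the bounded fluctuation hypothesis to sandwich $S_nf(\omega\rho,\lambda)$ between $S_nf(\omega\rho)+nc+qd$ and $S_nf(\omega\rho)+nc+pd$, deduces $N'_{\rho}(T+qd)\leq N_{\rho}^{\lambda}(T)\leq N'_{\rho}(T+pd)$ for the deterministic counting function of $g=f+c$, and concludes by Theorem \ref{border 0 non-degeneric}. Your explicit handling of the $i=1$ coordinate via the conservation law and of the identification $\delta^{\lambda}=\delta_g$ are points the paper leaves implicit, but the argument is the same.
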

\begin{proof}
By the definition of bounded fluctuation, we can find the following inequalities for the random ergodic sum: 
\begin{align*}
S_nf(\omega\rho)+nc+qd & \leq S_nf(\omega\rho,\lambda)=S_nf(\omega\rho)+\log|\lambda_0...\lambda_{n-1}| \\
 & \leq S_nf(\omega\rho)+nc+pd
\end{align*}
Next, we consider the function $g=f+c$, and we assign a counting function for it, say $N_{\rho}'(T)$. Then using the above inequalities we get
$$N'_{\rho}(T+q d) \leq N_{\rho}^{\lambda}(T) \leq N'_{\rho}(T+p d).$$
This yields
$$
\exp(\delta^{\lambda}q d) \dfrac{N'_{\rho}(T+q d)}{\exp(\delta^{\lambda}(T+q d))} \leq \dfrac{N_{\rho}^{\lambda}(T)}{\exp(\delta^{\lambda}T)} \leq \dfrac{N'_{\rho}(T+p d)}{\exp(\delta^{\lambda}(T+p d))}\exp(\delta^{\lambda}p d).
$$
It is enough to use theorem \ref{border 0 non-degeneric} to conclude the proof.
\end{proof}

\noindent Note that when $E$ has a finite number of alphabets then $g$ inherits strong regularity from $f$, therefore the first assumption in this theorem is redundant. Furthermore, it is clear that every eventually periodic $\lambda$ is of bounded fluctuation. Therefore this theorem implies theorem \ref{future periodic d-generic} when $\#\mathcal{Z}$ is finite. However, it doesn't imply corollary \ref{future periodic strongly d-generic}. It is essential to note that we did not assume the D-generic property in theorem \ref{main theorem 5}. Additionally, it is not hard to see that $\delta^{\lambda}$ is the unique root of 
$$P(x)+cx=0.$$
Note that if $f$ also has the D-generic property, then the bounds obtained above may not necessarily be improved. However, by imposing strongly D-generic property on $f$ we get a better estimate for $C$ and $D$. For this, it is enough for us to notice 
$$\mathcal{L}_{sg}=e^{sc}\mathcal{L}_{sf},$$
$$P(xg)=P(xf)+cx,$$
therefore in case $\mathcal{L}_{sg}$ admits $e^{P(xf)+cx}$ as an eigenvalue, $\mathcal{L}_{sf}$ must admit $e^{P(xf)-ciy}$ as an eigenvalue, but this cannot happen, since $\mathcal{L}_{sf}$ does not admit any eigenvalue of modulus $e^{P(x)}$.

\section{Non-Exponential Growth} \label{non exp growth}

\noindent So far the classes of $\lambda \in \Lambda$ that we investigated the counting function for, have all $\nu$-measure 0. In fact, the set of eventually periodic $\lambda \in \Lambda$ or the set of $\lambda \in \Lambda$ with bounded fluctuation have $\nu$-measure 0. We noticed that the counting problem in random dynamics had similar behavior to that of deterministic systems under certain conditions, i.e. the growth rate of the counting function is exponential. However, this behavior does not hold in general. In fact, for most $\lambda$ this behavior is not observed. We provide such a system below in which for almost every $\lambda \in \Lambda$ the exponential growth is not observed. 

\noindent We let 
$$\mathcal{Z}=\{ z, w \}, \; \; \; \; \; 0<z<w<1.$$ 
Therefore,
$$\Lambda=\{z,w\}^{\mathbb{Z}}.$$
We consider the Bernoulli measure on $\Lambda$,
$$\nu([\lambda_0=z])=p, \; \; \; \; \nu([\lambda_0=w])=q, \;  \; \; \; \; \; p+q=1.$$
Next, we set $E:=\{ 0,1\}$ and we consider any arbitrary irreducible incidence matrix $A$. For the deterministic CIFS, we consider
$$\phi_e(t)=\alpha_e t+\beta_e, \; \; \; \; \; \;t\in [0,1], \; \; \alpha_e, \beta_e \in (0,1), \; \; \; e \in E,$$
where $\alpha_e,\beta_e$'s are subject to the conditions of a conformal graph-directed Markov system, see definition 15 in \cite{HN}.
Following section \ref{sec 3}, we construct a random CIFS below
$$\phi_e^{\lambda}(t)=\lambda_0\phi_e(t)=\lambda_0(\alpha_e t+\beta_e), \; \; \; \; \; e \in E, \; \; \lambda \in \Lambda.$$
Then one can find the random potential:
$$f(\rho, \lambda)=\log |(\phi_{\rho_1}^{\lambda})'(\pi^{T(\lambda)}(\sigma\rho))|=\log (\alpha_{\rho_1} \lambda_0), \; \; \; \; \rho \in E^{\mathbb{N}}, \; \; \lambda \in \Lambda,$$
and the random pressure:
$$P^{\lambda}(x)=P(x)+x\log \lambda_0=\log (\alpha_0^x+\alpha_1^x)+x\log ( \lambda_0), \; \; \; \; \; x \in \mathbb{R}, \; \; \lambda \in \Lambda.$$ 
Therefore, the expected pressure is given by:
$$\mathcal{E}P(x)=\int_{\Lambda}P^{\lambda}(x)\text{d}\lambda=\log (\alpha_0^x+\alpha_1^x)+x(p\log z+q\log w), \; \; \; x \in \mathbb{R}.$$
We also want to use the law of the iterated logarithm \ref{lil} in this setting. For the random walk $s_{n,z}$, we obtain
\begin{equation}\label{lil2}
    \liminf_{n \to \infty} \dfrac{s_{n,z}(\lambda)-np}{\sqrt{2pq n\log \log n}}=-1, \; \; \; \; \; \limsup_{n \to \infty} \dfrac{s_{n,z}(\lambda)-np}{\sqrt{2pq n\log \log n}}=1 \; \; \; \; \; \text{a.e. } \lambda \in \Lambda
\end{equation}
Note that this law implies the fluctuation of the normalized random walk 
$$s_{n,z}(\lambda)-np$$
in the sense that every state is met infinitely often since it is a one-dimensional random walk, see \citep[193]{revesz}.

\noindent We mention below the inequality obtained in the example 3 in \cite{HN} (when $\alpha_0=\alpha_1=\alpha$)
\begin{equation}\label{eqr 77}
C \leq \liminf_T \dfrac{N (T)}{\exp(\delta T)}\leq \limsup_T \dfrac{N (T)}{\exp(\delta T)} \leq D,
\end{equation}
where $C,D>0$ and $\delta=\log r(A)/-\log \alpha$.
\begin{theorem}\label{main theorem 4}
For the random CIFS constructed above, when $\alpha_0=\alpha_1=\alpha$, we have
$$\liminf_T \dfrac{N^{\lambda}_{\rho}(T)}{\exp(\delta_{\Lambda} T)}=0, \; \; \; \limsup_T \dfrac{N^{\lambda}_{\rho}(T)}{\exp(\delta_{\Lambda} T)}=\infty, \; \; \; \nu-\text{a.e.} \; \lambda \in \Lambda.$$
\end{theorem}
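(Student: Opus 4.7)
The plan is to exploit the degeneracy $\alpha_0 = \alpha_1 = \alpha$, which makes the deterministic ergodic sum $S_n f(\omega\rho) = n\log\alpha$ independent of $\omega$. By the random ergodic sum formula \ref{random ergodic sum},
\begin{equation*}
S_n f(\omega\rho,\lambda) = n\log\alpha + s_{n,z}(\lambda)\log z + (n-s_{n,z}(\lambda))\log w =: -\gamma_n(\lambda),
\end{equation*}
which depends only on $n$ and $\lambda$. Since $\alpha,z,w\in(0,1)$, $\gamma_n(\lambda)$ is strictly increasing in $n$, so the counting function \ref{random counting} collapses to the purely entropic sum
\begin{equation*}
N_\rho^\lambda(T) = \sum_{n=1}^{n_T(\lambda)} |E_\rho^n|, \qquad n_T(\lambda) := \max\{n \geq 1 : \gamma_n(\lambda) \leq T\}.
\end{equation*}

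Since $A$ is irreducible over the two-letter alphabet $E$, Perron-Frobenius provides $|E_\rho^n| \asymp r(A)^n$ with constants independent of $n$, so the geometric sum is controlled by its last term: $N_\rho^\lambda(T) \asymp r(A)^{n_T(\lambda)}$. Introducing $a := -\log\alpha - \log w$, $b := \log w - \log z > 0$, and the centered walk $\Delta_n(\lambda) := s_{n,z}(\lambda) - np$, I can rewrite $\gamma_n(\lambda) = n(a+bp) + b\Delta_n(\lambda)$. The identity $\mathcal{E}P(\delta_\Lambda)=0$ coming from \ref{eqr 8} reads $\log r(A) = \delta_\Lambda(a+bp)$, so evaluating at the discontinuity times $T_n := \gamma_n(\lambda)$ yields the clean estimate
\begin{equation*}
\frac{N_\rho^\lambda(T_n)}{\exp(\delta_\Lambda T_n)} \asymp \exp\bigl(-\delta_\Lambda b\, \Delta_n(\lambda)\bigr).
\end{equation*}

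The law of the iterated logarithm \ref{lil2} then supplies, for $\nu$-a.e.\ $\lambda$, subsequences $n_k^-, n_k^+ \to \infty$ along which $\Delta_{n_k^\pm}(\lambda)/\sqrt{2pq\,n_k^\pm\log\log n_k^\pm}$ tends to $-1$ and $+1$ respectively. Along the times $T_{n_k^-}$ the displayed exponential tends to $+\infty$, forcing $\limsup_T = \infty$, while along $T_{n_k^+}$ it vanishes, forcing $\liminf_T = 0$.

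The main place I expect care to be needed is the passage from the discrete discontinuity times $T_n$ to the continuous variable $T$ in the statement of the theorem; this is handled by noting that $T_{n+1}-T_n$ is bounded above by $-\log(\alpha z)$, so restricting $T$ to the jump times only perturbs the ratio by a bounded multiplicative factor, irrelevant for liminf and limsup. The moral is that the arithmetic fluctuations of $\log|\lambda_0\cdots\lambda_{n-1}|$, which by LIL have amplitude of order $\sqrt{n\log\log n}$, completely dwarf the logarithm $n\log r(A)$ that measures the word multiplicity, so the ratio $N_\rho^\lambda(T)/\exp(\delta_\Lambda T)$ is forced to oscillate between $0$ and $\infty$ almost surely.
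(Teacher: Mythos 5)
Your proof is correct and follows essentially the same route as the paper: reduce $N_\rho^\lambda$ at its jump times to the deterministic count $\asymp r(A)^n$, identify the ratio with $\exp(-\delta_\Lambda b\,\Delta_n)$ where $\Delta_n=s_{n,z}(\lambda)-np$ is the centered walk, and let the almost sure unbounded oscillation of $\Delta_n$ drive the ratio to $0$ and $\infty$. The only differences are presentational: the paper fixes an integer level $m$, uses recurrence to find upcrossing times of $pn+m$, and then sends $m\to\pm\infty$ (a double limit your direct use of the LIL subsequences avoids), and you additionally spell out the harmless interpolation between consecutive jump times, which the paper leaves implicit.
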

\begin{proof}
It is not hard to see that the root of the expected pressure is
$$\delta_{\Lambda}=-\frac{\log r(A)}{\log \alpha+p\log z+q\log w}.$$
Now we consider a $\lambda \in \Lambda$ for which the law of the iterated logarithm \ref{lil2} holds. Due to the fluctuation of random walk, for an arbitrary integer $m$, one can find a sequence $n_i$ such that 
$$s_{n_i,z}({\lambda}) \leq pn_i+m < s_{n_i+1,z}(\lambda).$$ 
Therefore, we can give the following estimate:
\begin{equation}\label{eqr 9}
n_i\log \alpha+(pn_i+m)\log z +(qn_i-m+1)\log w  \leq S_{n_i}f(\omega \rho,\lambda)
\end{equation}
$$\leq n_i\log \alpha+(pn_i+m-1)\log z +(qn_i-m)\log w .$$
Furthermore, note that since $f(\rho, \lambda)$ and so $S_n(\rho,\lambda)$ are constant negative functions in $\rho$ variable, then for $T_i=-S_{n_i}f(\omega \rho, \lambda)$, we obtain the formula:
$$N_{\rho}^{\lambda} (T_i)=\# \{\omega \in E_{\rho}^* : S_{|\omega|}f(\omega \rho,\lambda)\geq -T_i\}=\# \{\omega \in E_{\rho}^* : |\omega|\leq n_i\}$$
$$=\# \{\omega \in E_{\rho}^* : |\omega|\log \alpha \geq n_i\log \alpha\}=\# \{\omega \in E_{\rho}^* :  S_{|\omega|}f(\omega \rho) \geq n_i\log \alpha\}$$
$$=N_{\rho}(-n_i\log \alpha).$$
This formula relates the counting problem in the random setting to the deterministic setting. Therefore, we can use the counting formula for the deterministic setting expressed in the inequality $\ref{eqr 77}$ above. For any small enough $\epsilon>0$, there exist constants $C,D>0$ and an integer $N$ such that for $i\geq N$:
\begin{equation}\label{recent}
 (C-\epsilon)r(A)^{n_i} \leq N_{\rho}^{\lambda} (T_i)\leq (D+\epsilon)r(A)^{n_i}.
\end{equation}
Additionally, inequality $\ref{eqr 9}$ gives:
$$\exp\left( -\delta_{\Lambda}\left( n_i\log \alpha+(pn_i+m-1)\log z +(qn_i-m)\log w\right)\right)$$
$$\hspace{-4.3cm} \leq \exp(\delta_{\Lambda}T_i)=\exp (-\delta_{\Lambda}S_{n_i}f(\omega \rho, \lambda))$$
$$\leq \exp \left(-\delta_{\Lambda}\left( n_i\log \alpha+(pn_i+m)\log z +(qn_i-m+1)\log w\right)\right),$$
which can be rewritten as
$$r(A)^{n_i}\exp\left(\delta_{\Lambda}(m\log \frac{w}{z}+\log z)\right) \leq \exp(\delta_{\Lambda}T_i)\leq r(A)^{n_i} \exp\left(\delta_{\Lambda}(m\log \frac{w}{z}-\log w)\right).$$
This along with \ref{recent} yields:
$$ (C-\epsilon) \exp\left(-\delta_{\Lambda}(m\log \frac{w}{z}-\log w)\right)\exp(\delta_{\Lambda}T_i) \leq N_{\rho}^{\lambda} (T_i) $$
$$\leq (D+\epsilon)\exp\left(-\delta_{\Lambda}(m\log \frac{w}{z}+\log z)\right) \exp(\delta_{\Lambda}T_i).$$
By passing to a subsequence, we find: 
$$(C-\epsilon)w^{\delta_{\Lambda}}(\frac{z}{w})^{\delta_{\Lambda}m} \leq \lim_{i} \frac{N_{\rho}^{\lambda} (T_i)}{ \exp(\delta_{\Lambda}T_i)} \leq \frac{D+\epsilon}{z^{\delta_{\Lambda}}}(\frac{z}{w})^{\delta_{\Lambda}m}.$$
Letting $m\to \infty$, we get:
$$\liminf_T  \frac{N_{\rho}^{\lambda} (T)}{ \exp(\delta_{\Lambda}T)}=0,$$
and if $m\to -\infty$, we obtain:
$$\limsup_T  \frac{N_{\rho}^{\lambda} (T)}{ \exp(\delta_{\Lambda}T)}=\infty.$$
\end{proof}
\noindent This theorem tells us the counting function does not have exponential growth for random systems in general. In fact, for almost all $\lambda \in \Lambda$, we cannot find constants $a^{\lambda},C^{\lambda},D^{\lambda},T^{\lambda}>0$ such that 
$$C^{\lambda} \exp(a^{\lambda}T) \leq N_{\rho}^{\lambda} (T) \leq D^{\lambda} \exp(a^{\lambda}T) \; \; \; \; T \geq T^{\lambda}.$$
This means that  Graham-Vaaler's Tauberian theorem (see theorem 4 in \cite{HN}) is not applicable to finding a general formula for all random systems. Actually, the boundary behavior of the random Poincar\'{e} series $\eta_{\rho}^{\lambda}(s)$ in \ref{random spectral decom} is not similar to that of the deterministic systems, in the sense that the complex function
$$\eta_{\rho}^{\lambda}(s)-\frac{A}{s-\delta_{\Lambda}}$$
does not have a nice bounded behavior on any vertical interval centered at $s=\delta_{\Lambda}$ on the critical line $x=\delta_{\Lambda}$. We cannot claim anything in general about the behavior of the random Poincar\'{e} series near the point $s=\delta_{\Lambda}$. There are examples of random systems for which the random Poincar\'{e} series is continuous on the critical line $x=\delta_{\Lambda}$ except at countably many (discrete) points one of which  $s=\delta_{\Lambda}$. However, we do not know about the singularity type of $s=\delta_{\Lambda}$. The best scenario is that it is a simple pole (on the right half plane $\Gamma^+$). In this case, the growth of the counting function will be exponential. One theorem from Tauberian theory suggests that the behavior of the Poincar\'{e} series may be essential, see theorem 2.1 in \cite{Ryo}. Because if at $s=\delta_{\Lambda}$ we have a pole of any order, then theorem \ref{main theorem 4} should not hold.

\noindent We add that the above theorem was proven under the assumption $\# \mathcal{Z}=2$. One may ask for other cases. Note that the random walk $s_{n,z}$ above is a one-dimensional random walk. When we have more elements in $\mathcal{Z}$, the dimensional of the random walk also increases. As stated earlier, if $\#\mathcal{Z}=a$ and $\nu([\lambda_0=z_i])=p_i$, then the random walk will be of the form
$$s_n=(s_{n,z_2}, s_{n,z_3}, ..., s_{n,z_{a}}),$$
which is a $(a-1)$-dimensional random walk. When $a\leq 3$ we can still establish a similar result. Also when $a=4$ one can still claim a similar result with some effort because the projection of the 3-dimensional random walk $s_n$ on the hyperplane with the normal $(p_2, p_3, p_4)$ is a 2-dimensional random walk and therefore Pólya theorem is applicable, see \citep[193]{HN}. But for any $a>4$, one requires a deeper understanding of the theory of random walk. To find the answer, it is important to know which ones of $(s_{n,z_i}-np_i)$'s are positive and which ones are negative. More precisely, when the sum
$$\sum_{i=1}^a (s_{n,z_i}-np_i)\log |z_i| \to \infty, \; \; \; \text{as} \; \;  n \to \infty,$$
then the counting function grows faster than the exponential and so the limit supremum of $\dfrac{N^{\lambda}(T)}{\exp(\lambda T)}$ will be infinity. On the other hand, when 
$$\sum_{i=1}^a (s_{n,z_i}-np_i)\log |z_i| \to -\infty, \; \; \; \text{as} \; \; n \to \infty,$$
then the counting function grows slower than the exponential and so the limit infimum of $\dfrac{N^{\lambda}(T)}{\exp(\lambda T)}$ will be zero. We conjecture that this phenomenon is occuring for almost all $\lambda \in \Lambda$ even when $a>4$.

\section{Examples}
\begin{example}
For the random CIFS constructed at the beginning of section \ref{non exp growth}, we set
$$\alpha=\frac{1}{3}, z=\frac{1}{5}, w=\frac{1}{7}.$$
Then it is not hard to see that we can actually get a bit stronger result than that of the theorem \ref{main theorem 4}. In fact, the limit set of $\dfrac{N^{\lambda}_{\rho}(T)}{\exp(\delta_{\Lambda} T)}$ as $T \to \infty$ is $[0,\infty]$, i.e. for each nonnegative real $C$ we can find a sequence $T_n \to \infty$ such that 
$$ \lim_{n \to \infty} \dfrac{N^{\lambda}_{\rho}(T_n)}{\exp(\delta_{\Lambda} T_n)}=C.$$
This is the case for $\nu$-a.e. $\lambda \in \Lambda$.
\end{example}
\begin{example}
If we consider a Schottky group that generates the Apollonian gasket as a deterministic system, it is known that with some modifications of corollary \ref{llll} one obtains a polynomial growth formula for the number of circles of radius at least $1/T$ in the packing, see theorem 6.2.13 in \cite{UP}. However, investigating the counting problem for the random Schottky group is not an easy question in general. But, it is good to notice that if $\mathcal{Z}$ is a subset of the unit circle, the answer would be exactly the same as that of the deterministic Schottky group due to the fact that the random ergodic sum is identical to the deterministic ergodic sum, see \ref{random ergodic sum}. This is actually expected, since each $z \in \mathcal{Z}$ may change each limit point, but it leaves the circles of inversions intact. In fact, these random factors act as rotations.
\end{example}
\noindent We just want to emphasize that the above example falls in the category of random CGDMS since the Schottky group does not arise from a CIFS. But still, our results are applicable as long as the very definition of random system is met.
\begin{example}\label{2,3,5,7}
Given the random CIFS constructed at the beginning of section \ref{non exp growth}, we set
$$\alpha_0=\dfrac{1}{2}, \; \; \; \alpha_1=\dfrac{1}{3}, \; \; \; z=\frac{1}{5}, \; \; \; w=\frac{1}{7}.$$
This is an example of a system that has the D-generic property which does not have the strongly D-generic property. The random Poincar\'{e} series for a future periodic $\lambda$ (given that $\sigma^m(\lambda_0\lambda_1\lambda_2...)=\lambda_0\lambda_1\lambda_2...$ for some $m \in \mathbb{N}$) can be simplied as (see proof of theorem \ref{future periodic d-generic}):
$$\eta_{\rho}^{\lambda}(s)=\left((\frac{1}{2^s}+\frac{1}{3^s})\lambda_0^s+(\frac{1}{2^s}+\frac{1}{3^s})^2\lambda_0^s\lambda_1^s+...+(\frac{1}{2^s}+\frac{1}{3^s})^m\lambda_0^s...\lambda_{m-1}^s\right)
\theta(s)^{-1}, \; \; \; x>x_0$$
where 
$$\theta(s)=1-(\frac{1}{2^s}+\frac{1}{3^s})^m\lambda_0^s...\lambda_{m-1}^s,$$
and $x_0$ is a real root of $\theta(s)$.
Using the triangle inequality we can see $x_0$ is actually the only root of $\theta(s)$ along $x=x_0$, see example 5 in \cite{HN} for a similar argument. We want to show $x_0$ is a root of multiplicity one. Note
$$\theta(s)=1-(\frac{1}{2^s}+\frac{1}{3^s})^m\lambda_0^s...\lambda_{m-1}^s=1-\left((\frac{\sqrt[m]{\lambda_0...\lambda_{m-1}}}{2})^s+(\frac{\sqrt[m]{\lambda_0...\lambda_{m-1}}}{3})^s\right)^m,$$
so by taking derivative we get
$$\theta'(s)=-m\left(\log \frac{\sqrt[m]{\lambda_0...\lambda_{m-1}}}{2}(\frac{\sqrt[m]{\lambda_0...\lambda_{m-1}}}{2})^s+\log \frac{\sqrt[m]{\lambda_0...\lambda_{m-1}}}{3}(\frac{\sqrt[m]{\lambda_0...\lambda_{m-1}}}{3})^s\right).$$
$$\times \left((\frac{\sqrt[m]{\lambda_0...\lambda_{m-1}}}{2})^s+(\frac{\sqrt[m]{\lambda_0...\lambda_{m-1}}}{3})^s\right)^{m-1}$$
If $\theta$ has a root of multiplicity 2 or higher, then $\theta'$ as well vanishes at that root. Looking for such $s$ we should have
\begin{equation}\label{r11}
\left((\frac{\sqrt[m]{\lambda_0...\lambda_{m-1}}}{2})^s+(\frac{\sqrt[m]{\lambda_0...\lambda_{m-1}}}{3})^s\right)^m=1,
\end{equation}
$$\log \frac{\sqrt[m]{\lambda_0...\lambda_{m-1}}}{2}(\frac{\sqrt[m]{\lambda_0...\lambda_{m-1}}}{2})^s+\log \frac{\sqrt[m]{\lambda_0...\lambda_{m-1}}}{3}(\frac{\sqrt[m]{\lambda_0...\lambda_{m-1}}}{3})^s=0.$$
The latter one leaves
\begin{equation}\label{r12}
(\frac{2}{3})^s=-\frac{\log \sqrt[m]{\lambda_0...\lambda_{m-1}}-\log 2}{\log \sqrt[m]{\lambda_0...\lambda_{m-1}}-\log 3}.
\end{equation}
Let $\beta$ represent the right-hand side of the above equality. Note that since $\beta$ is real, then the left-hand side must be real either. This gives:
$$y=\frac{j\pi}{\log(2/3)}, \; \; j \in \mathbb{Z}.$$
Substituting \ref{r12} into \ref{r11}, yields:
$$\left((\frac{\sqrt[m]{\lambda_0...\lambda_{m-1}}}{2})^s+\beta(\frac{\sqrt[m]{\lambda_0...\lambda_{m-1}}}{2})^s\right)^m=1.$$
Therefore,
$$(\frac{\sqrt[m]{\lambda_0...\lambda_{m-1}}}{2})^{ms}=\frac{1}{(1+\beta)^m}.$$
Again the right-hand side of this is real, and so
$$y=l\pi(m\log\frac{\sqrt[m]{\lambda_0...\lambda_{m-1}}}{2})^{-1}, \; \; \; l \in \mathbb{Z}.$$
We found two expressions for $y$. This is possible only if $\log(\frac{2}{3})/\log (\frac{\sqrt[m]{\lambda_0...\lambda_{m-1}}}{2})$ is rational, which cannot happen. This shows all the roots of $\theta$ are of multiplicity one and so all the poles of the random Poincar\'{e} series are simple.
Therefore the Ikehara-Wiener theorem is applicable, see theorem 3 in \cite{HN}. Thus, there exists $C>0$ such that
$$\frac{N_{\rho}^{\lambda}(T)}{\exp(\delta_{\Lambda}T)} \rightarrow C, \; \; \text{as} \; \; T \to \infty.$$
\end{example}

\noindent It is clear that we can find a similar formula for eventually future periodic $\lambda$ in the previous example. Furthermore, as we said the above system is not strongly D-generic; however, the counting function has similar asymptotic growth to that of the systems with strongly D-generic property, see corollary \ref{future periodic strongly d-generic}.

\noindent At the end, we want to make a few remarks. One is that the reason we focused on random CIFS in this paper is mostly related to the very definition of the random system. We observed that our counting methods highly depend on the fact that the deterministic ergodic sum can be separated in the expression of the random ergodic sum. This allowed us to employ the well-known methods for the counting problem combined with the theory of random walk. When the deterministic ergodic sum cannot be separated in the random ergodic sum, we face a much harder problem. This depends on how one constructs a random system. Second, we want to emphasize that our results can also be expressed for random CGDMS as long as the very definition of the random system is satisfied. In fact, we limited ourselves to the random CIFS since when we multiply the deterministic function systems $\{\phi_e \}$ (all defined on a compact Euclidean space $X$) by a random factor $\lambda_0$. This makes the random function system $\{ \phi_e^{\lambda} \}$ well-defined on the same compact Euclidean space $X$. When one goes beyond this to the random CGDMS, one should be careful with multiplying by a random factor. But as long as the very definition of the random system is satisfied, the aforementioned results will also carry through. Third, one may be interested in a more inclusive result that contains all random systems. We conjecture that such a result can only have a probabilistic form. In fact, if one wants to find a formula that captures information on the growth rate of any given random counting function, they may want to investigate a formula that involves some limit theorem on $\Lambda$. The alternative approach, but of less hope and in some special cases, would be understanding the General Dirichlet series. The well-known Dirichlet series does not help much in studying random systems. Finally, we emphasize that in this paper we did not focus on counting periodic words. We gave a thorough estimate for approximating the "counting function for periodic words" by the "counting function for finite words" in \cite{HN}. Therefore, it is clear that once the latter has exponential growth the former also resembles similar behavior.

\end{large}

\bibliographystyle{alpha}
\bibliography{bibbb.bib}
\end{document}